\documentclass[journal,twoside,web]{ieeecolor}
\usepackage{lcsys} % For IEEE-CSC letters requires: Icsys.sty
\usepackage{xcolor} % When using IEEEclc.els  and for TAC JOURNAL
\usepackage{titlesec} % Adjust section spacing
\titlespacing*{\section}{0pt}{*0.5}{*0.5}
\titlespacing*{\subsection}{0pt}{*0.4}{*0.4}
\AtBeginDocument{%
	\setlength{\textfloatsep}{1pt plus 1pt minus 2pt}%  % between top/bottom float and body text
	\setlength{\floatsep}{2pt plus 1pt minus 1pt}%      % between consecutive floats on a float page
	\setlength{\intextsep}{2pt plus 1pt minus 1pt}%     % around h-placed (in-text) floats
	\setlength{\abovecaptionskip}{2pt}%                 % between float body and its caption
	\setlength{\belowcaptionskip}{2pt}%                 % below the caption
}

\useRomanappendicesfalse

\usepackage{amsfonts,mathtools}
\usepackage{graphicx}
\usepackage{xcolor}
\usepackage{bm}
\usepackage{booktabs,multirow}
\usepackage{cite}
\usepackage[colorlinks=true,linkcolor=blue,citecolor=blue,urlcolor=blue]{hyperref}

\newcommand{\titlesc}[1]{\title{\huge \vspace{0.7cm} \LARGE \centering {\textsc{{#1}}}}}

\definecolor{bittersweet}{rgb}{1.0, 0.44, 0.37}
\definecolor{blue(munsell)}{rgb}{0.0, 0.5, 0.69}
\definecolor{viridian}{rgb}{0.25, 0.51, 0.43}

\makeatletter
\def\tagform@#1{\maketag@@@{\textcolor{black}{(\ignorespaces#1\unskip\@@italiccorr)}}}
\makeatother

\usepackage[ruled,vlined,linesnumbered]{algorithm2e}
\SetKw{KwBreak}{break}
\usepackage{tikz}
\usetikzlibrary{arrows.meta,calc,positioning}
\usepackage{pgfplots}
\pgfplotsset{compat=1.18}

\newtheorem{problem}{Problem}
\newtheorem{example}{Example}
\newtheorem{theorem}{Theorem}
\newtheorem{corollary}{Corollary}
\newtheorem{remark}{Remark}

\let\labelindent\relax
\usepackage{enumitem}
\setlist[itemize]{leftmargin=*}

\makeatletter
\def\@begintheorem#1#2{%
	\@IEEEtmpitemindent\itemindent\topsep 0pt\rmfamily\trivlist
	\item[\hskip\labelsep{\noindent\bfseries #1~#2.}]\itemindent\@IEEEtmpitemindent}
\def\@opargbegintheorem#1#2#3{%
	\@IEEEtmpitemindent\itemindent\topsep 0pt\rmfamily\trivlist
	\item[\hskip\labelsep{\noindent\bfseries #1~#2\normalfont~(#3)\bfseries.}]\itemindent\@IEEEtmpitemindent}
\renewenvironment{proof}[1][Proof]{%
	\par\normalfont
	\@IEEEtmpitemindent\itemindent\topsep 0pt\trivlist
	\item[\hskip\labelsep{\noindent\bfseries #1.}]\itemindent\@IEEEtmpitemindent\ignorespaces
}{%
	\unskip\nobreak\hfill\mbox{\rule[0pt]{1.3ex}{1.3ex}}\par
	\endtrivlist\@endpefalse
}
\makeatother	

\begin{document}
	\titlesc{Revisiting the PBH Test: \\Fast Uncontrollability Certificates via Krylov Methods}
	
	\author{Ahmad F. Taha$^{\diamond}$, Mohamad H. Kazma, and Abdallah A. Albustami\vspace{-1cm}
		\thanks{
			$^\diamond$Corresponding author. All authors are with the Civil \& Environmental Engineering Department at Vanderbilt University. A. Taha has a secondary appointment in ECE. Email: \url{ahmad.taha@vanderbilt.edu}.    }
	}
	\maketitle
	\thispagestyle{empty}
	\pagestyle{empty}
	
	\begin{abstract}
		This letter revisits the classical PBH test through the lens of finite-horizon reachability. By casting state transfer as a minimum energy, primal optimization problem, we show that unreachable state-space maneuvers admit dual infeasibility certificates. These certificates are computable without forming the controllability matrix meaning that uncontrollability can be efficiently certified. We prove that any such certificate is a linear combination of uncontrollable generalized eigenvectors, thereby providing a spectral interpretation without a global eigendecomposition. We also devise algorithms based on Krylov subspace methods that extract some of the uncontrollable PBH modes from a certificate and demonstrate favorable scaling on large dynamic networks with thousands of nodes. 
	\end{abstract}
	% \vspace{-0.2cm}
	\begin{IEEEkeywords}
		Controllability, PBH test, infeasibility certificates, convex duality, large-scale networks.
	\end{IEEEkeywords}
	
	% \vspace{-0.15cm}
	
	\section{Introduction and Letter Contributions}\label{sec:intro}
	\IEEEPARstart{C}{ontrollability} characterizes whether an input can steer the state of a linear time-invariant (LTI) system throughout its state-space. For discrete-time dynamics $\mathbf{x}_{k+1}=\mathbf{A}\mathbf{x}_k+\mathbf{B}\mathbf{u}_k$, the classical rank condition $\mathrm{rank}\,\boldsymbol{\mathcal{C}}_n=n$ (with $\boldsymbol{\mathcal{C}}_n=[\mathbf{B},\mathbf{A}\mathbf{B},\dots,\mathbf{A}^{n-1}\mathbf{B}]$) produces a binary algebraic test. The Popov-Belevitch-Hautus (PBH) test complements this view by providing a modal characterization: pair $(\mathbf{A},\mathbf{B})$ is controllable if and only if $\mathrm{rank}\,[\lambda \mathbf{I}-\mathbf{A}\;\;\mathbf{B}]=n$ for every eigenvalue $\lambda$ of $\mathbf{A}$. Equivalently, there exists no nonzero left eigenvector $\mathbf{w}$ satisfying $\mathbf{w}^\top\mathbf{A}=\lambda\,\mathbf{w}^\top$ and $\mathbf{w}^\top\mathbf{B}=\mathbf{0}$~\cite{hautus1969controllability}. Beyond deciding controllability, the PBH test localizes \emph{how} controllability fails by identifying the uncontrollable eigenvalues, making it a diagnostic tool for analysis. An uncontrollable eigenvalue $\lambda_i$ of $\mathbf{A}$ is one whose associated left eigenvector $\mathbf{w}_i$ satisfies $\mathbf{w}_i^\top\mathbf{B}=\mathbf{0}$, so that the input $\mathbf{B}$ has no effect on that mode. While these criteria are equivalent in exact arithmetic, their numerical evaluation is delicate and depends on the conditioning of the underlying spectral computations.
	
	Scaling controllability tests to modern networked systems remains nontrivial. Power grids, transportation, and water networks (most are \textit{Kalman-uncontrollable})	routinely produce sparse state-space matrices with $n$ in the thousands, where dense eigendecomposition is impractical---and where left-eigenvector computations can be ill-conditioned in the presence of weakly damped modes. %clustered spectra, 
	In large-scale systems, Krylov subspace methods can extract only selected eigen-information using matrix vector products rather than dense factorization~\cite{saad2011numerical,sorensen1992implicit}. Related lines in network controllability leverage PBH logic through eigenvalue multiplicities and sparse spectral computations~\cite{yuan2013exact}. These approaches scale, but they still require computing and validating invariant subspaces over spectral regions, and they remain \emph{mode-centric} even when only a specific maneuver is of interest.
	
	\vspace{0.15cm}
	
	\noindent \textsc{\textbf{Letter Objective and Approach}.} The letter's objective is as follows: certifying uncontrollability of discrete-time (DT) systems via a meaningful and scalable certificate.  Specifically, this letter develops a complementary, \emph{target-centric} route. We start from finite-horizon reachability and a minimum energy optimization problem formulation, which asks for the smallest-energy input sequence that realizes a desired displacement $\Delta \mathbf{x}$ over $N$ steps. When $\Delta \mathbf{x}$ lies outside the reachable subspace, the problem becomes primal infeasible. In optimization, \textit{Theorems of Alternatives} certify such infeasibility by a separating vector~\cite[Chapter 5.8]{boyd2004convex}. Modern solvers return this object as a \emph{dual ray} as a standard component of their output. The literature commonly uses certificates to decide feasibility, but has rarely exploited them as objects for spectral analysis. Our main observation is that infeasibility certificates carry additional information beyond a proof of infeasibility: they encode PBH-relevant eigenstructure. In particular, certificates decompose into uncontrollable generalized eigenspaces. This establishes an explicit \textit{spectral geometric connection}---an infeasibility certificate is a structured linear combination of uncontrollable eigenvectors.
    
		\noindent \textsc{\textbf{Contributions}.} The letter demonstrates that this certificate-generation route is $\boldsymbol{\mathcal{C}}_N$-building free and \textcolor{black}{implementable without a global eigendecomposition}: \textit{(i)} infeasibility detection and certificate extraction use iterative products with $\mathbf{A}$, $\mathbf{B}$, and their transposes; and \textit{(ii)} modal extraction operates on a low-dimensional subspace generated by $\mathbf{y}$, reducing spectral computation to a companion matrix of degree $r\ll n$. \textcolor{black}{The methods then complement the PBH test by producing terminal target-centric test of uncontrollability via scalable certificates.} The letter's \textit{contributions} are:
	\begin{itemize}
		\item establishing a certificate-based reinterpretation of PBH by proving that $\mathrm{Null}(\boldsymbol{\mathcal{C}}_N^\top)$ is composed of uncontrollable generalized left eigenvectors and that any infeasibility certificate is a strict linear combination of such modes (Sec.~\ref{sec:pbh_cert});
		\item developing a \emph{target-aware} diagnostic: certificates satisfying $\mathbf{y}^\top\Delta \mathbf{x}\neq 0$ isolate only the modes that obstruct a desired maneuver (Sec.~\ref{sec:pbh_cert});
		\item presenting a scalable framework that computes certificates without forming $\boldsymbol{\mathcal{C}}_N$ and extracts obstructing eigenvalues/modes from certificates using shifted inverse iteration and Krylov-based reduction (Sec.~\ref{sec:pbh_cert} and~\ref{sec:extraction});
		\item reporting evidence comparing runtime and accuracy against PBH, Gramian, and least squares implementations (Sec.~\ref{sec:case}).
	\end{itemize}

	\section{Problem Formulation and Background}\label{sec:prob}
Consider the discrete-time LTI system
	\begin{equation}\label{equ:sys}
		\mathbf{x}_{k+1}=\mathbf{A}\mathbf{x}_k+\mathbf{B}\mathbf{u}_k,\qquad k=0,1,\dots,N-1,
	\end{equation}
	with state $\mathbf{x}_k\in\mathbb{R}^n$, input $\mathbf{u}_k\in\mathbb{R}^m$, and horizon $N\in\mathbb{N}$.  Given an initial condition $\mathbf{x}_0$ and a desired terminal state $\mathbf{x}_N$, we recall a standard reachability identity that supports the following viewpoint adopted in this letter: \emph{uncontrollability can be detected and explained via certificates, not only via spectral tests.}  To drive the DT LTI system~\eqref{equ:sys} from an initial state $\mathbf{x}_0$ to a target $\mathbf{x}_N$ in $N$ steps, we expand the dynamics accordingly:
	\begin{equation*}
		\underbrace{\mathbf{x}_N - \mathbf{A}^N \mathbf{x}_0}_{\Delta \mathbf{x}} = \boldsymbol{\mathcal{C}}_N \underbrace{\begin{bmatrix}\mathbf{u}_{N-1}^\top & \cdots & \mathbf{u}_0^\top\end{bmatrix}^\top}_{\mathbf{u}}.
	\end{equation*}
	Defining the target displacement as $\Delta \mathbf{x} = \mathbf{x}_N - \mathbf{A}^N \mathbf{x}_0$, we obtain the fundamental linear constraint $\boldsymbol{\mathcal{C}}_N \mathbf{u} = \Delta \mathbf{x}$. {
		For \(N\ge n\), the Cayley-Hamilton theorem implies that \(\operatorname{range}(\boldsymbol{\mathcal{C}}_N)=\operatorname{range}(\boldsymbol{\mathcal{C}}_n)\), so horizons beyond \(n\) do not generate new reachable directions. Hence, for feasibility testing, \(N=n\) is sufficient without loss of generality, although we retain \(N\ge n\) to allow an arbitrary prescribed horizon. Also, since \(\boldsymbol{\mathcal{C}}_N\in\mathbb{R}^{n\times mN}\), the minimum-energy problem is underdetermined only when \(mN>n\), or more precisely when \(mN>\operatorname{rank}(\boldsymbol{\mathcal{C}}_N)\).} We resolve this redundancy by solving the \textit{minimum energy control problem} (MECP):
	\begin{equation}~\label{equ:mecp_alt}
		\begin{aligned}
			\hspace{-0.35cm}\mathrm{\textbf{MECP}}: \;\; & \underset{\mathbf{u}}{\text{min}} 
			& & J(\mathbf{u}) = \frac{1}{2} \|\mathbf{u}\|_2^2 \;\;\; \text{s.t.} \;\;\; \boldsymbol{\mathcal{C}}_N \mathbf{u} = \Delta \mathbf{x}.
		\end{aligned}
	\end{equation}

	To find the solution to MECP, $\boldsymbol{\mathcal{C}}_N$ has to have full row rank. \textcolor{black}{In large-scale networks, the system is often \emph{structurally} rank-deficient ($\mathrm{rank}(\boldsymbol{\mathcal{C}}_n)<n$, a property of the pair $(\mathbf{A},\mathbf{B})$ alone), in which case $\Delta\mathbf{x}\notin\mathrm{range}(\boldsymbol{\mathcal{C}}_n)$ for any target not in the reachable subspace, rendering the MECP infeasible for such targets.} We interpret this uncontrollability-induced infeasibility through primal dual infeasibility certificates from convex optimization.

	% \subsection{Dual Infeasibility and the Theorem of Alternatives}
	Infeasibility in optimization is typically detected via dual forms of the main primal problem. Many solvers present a certificate vector as the output when the problem is infeasible. The \textit{Fredholm Alternative}~\cite{Fredholm1903}, a specific instance of \textit{Farkas' Lemma}~\cite[Chapter 5.8.3]{boyd2004convex}, when applied to~\eqref{equ:mecp_alt} states that $\boldsymbol{\mathcal{C}}_N \mathbf{u} = \Delta \mathbf{x}$ is infeasible if and only if there exists a vector $\mathbf{y}$ such that:
	\begin{equation}\label{equ:certificate_alt}
		\boxed{\boldsymbol{\mathcal{C}}_N^{\top} \mathbf{y} = \mathbf{0},\qquad \mathbf{y}^{\top}\Delta \mathbf{x}\neq 0.}
	\end{equation}
	Such a vector $\mathbf{y}$ is called a \textit{certificate of infeasibility}~\cite{boyd2004convex}, also referred to as a \textit{certificate of inconsistency}, as it proves that $\Delta \mathbf{x}$ is not in the range of $\boldsymbol{\mathcal{C}}_N$. It represents an unreachable direction in the state-space: no input sequence can produce a component of the state along $\mathbf{y}$ (since $\boldsymbol{\mathcal{C}}_N^\top \mathbf{y} = 0$), yet the desired target requires displacement in that direction. Optimization solvers return this dual vector $\mathbf{y}$ when declaring infeasibility, \textcolor{black}{but it is possible to compute these certificates without optimization; the objective pursued herein}. The problem formulation studied in this letter is given next. 
	
	\begin{problem}[Fast Uncontrollability Certification]\label{pbm:pbm1} Given $(\mathbf{A},\mathbf{B})$, a horizon $N$, and a target displacement $\Delta \mathbf{x}$, decide whether $\Delta \mathbf{x}\in\mathrm{range}(\boldsymbol{\mathcal{C}}_N)$.
		If $\Delta \mathbf{x}$ is unreachable, compute an infeasibility certificate $\mathbf{y}$ satisfying \eqref{equ:certificate_alt} without forming $\boldsymbol{\mathcal{C}}_N$ and use $\mathbf{y}$ to identify the $r$ uncontrollable PBH modes of $(\mathbf{A},\mathbf{B})$.
	\end{problem}
	
	The certificate of infeasibility $\mathbf{y}$ is related to the classical PBH rank test which is an equivalent test for controllability. 
	%  a replacement for rank of the intractable $\boldsymbol{\mathcal{C}}_N$. 
	The PBH test states that a system is uncontrollable if and only if there exists a left eigenvector $\mathbf{w}^\top$ of $\mathbf{A}$ (associated with eigenvalue $\lambda$) such that $\mathbf{w}^\top \mathbf{B} = \mathbf{0}_m$ \cite{kailath1980linear}. An eigenvector $\mathbf{w}$ can be a valid certificate of infeasibility---or the system being uncontrollable. One could show that the PBH test fails if and only if there exists an infeasibility certificate $\mathbf{y}$ satisfying~\eqref{equ:certificate_alt}. We start with two examples showing how the certificates of infeasibility can be generated from the PBH test.
	\begin{example}[Coinciding Certificates]
		Consider a diagonal system with three decoupled modes, where $\mathbf{x}_3$ is unactuated:
		% \begin{equation*}
			$\mathbf{A} = \mathrm{diag}(1, 2, 3),  
			\mathbf{B} = [1, 1, 0]^\top, 
			\Delta \mathbf{x} = [1, 1, 1]^\top.$
			% \end{equation*}
		The controllability matrix $\boldsymbol{\mathcal{C}}_3$ has a zero third row. The PBH test identifies $\mathbf{w}_3 = [0, 0, 1]^\top$ as the unique uncontrollable mode ($\mathbf{w}_3^\top \mathbf{B} = 0$). Solving MECP~\eqref{equ:mecp_alt}, a solver returns a dual certificate $\mathbf{y}$ satisfying $\boldsymbol{\mathcal{C}}_3^\top \mathbf{y} = 0$ and $\mathbf{y}^\top \Delta \mathbf{x} > 0$. By inspection, the null space of $\boldsymbol{\mathcal{C}}_3^{\top}$ is spanned by $\mathbf{y} = [0, 0, 1]^\top$. In this clean case, the optimization certificate $\mathbf{y}$ is \textit{identical} to the PBH eigenvector $\mathbf{w}_3$.
		
	\end{example}
	
	\begin{example}[Mixed Certificates]
		Consider now a similar system where the input actuates only the first of three modes:
		% \begin{equation*}
			$\mathbf{A} = \mathrm{diag}(1, 2, 3), \quad 
			\mathbf{B} = [1, 0, 0]^\top, \quad
			\Delta \mathbf{x} = [0, 1, 1]^\top.$
			Here, two modes are uncontrollable: $\lambda_2=2$ ($\mathbf{w}_2=\mathbf{e}_2$) and $\lambda_3=3$ ($\mathbf{w}_3=\mathbf{e}_3$). The target $\Delta \mathbf{x}$ requires movement in \textit{both} directions. \textcolor{black}{One natural certificate is the orthogonal projection of $\Delta\mathbf{x}$ onto the null space:} $\mathbf{y} = [0, 1, 1]^\top$.
			Observe that while $\mathbf{y}$ is a valid certificate ($\boldsymbol{\mathcal{C}}_3^\top \mathbf{y} = \mathbf{0}$), it is \textit{not} an eigenvector of $\mathbf{A}$, as $\mathbf{A}^\top \mathbf{y} = [0, 2, 3]^\top \neq \lambda \mathbf{y}$. The certificate $\mathbf{y} = \mathbf{w}_2 + \mathbf{w}_3$ is a linear combination of modes, and hence is \textit{not} unique. \end{example}

		\section{Spectral Decomposition of Certificates}\label{sec:pbh_cert}
		We now establish the link between the geometric certificate of infeasibility $\mathbf{y}$ and the dynamic modes of the system (identified by the PBH test). 	\textcolor{black}{The first result is  stated for general $\mathbf{A}$ (including non-diagonalizable systems with Jordan blocks), with the diagonalizable case as a corollary afterward.}

		\begin{theorem}[\textcolor{black}{Linking PBH Test and Infeasibility Certificates}]\label{thm:PBHcert}
			\textcolor{black}{Consider the LTI system~\eqref{equ:sys} with $\mathbf{A} \in \mathbb{R}^{n \times n}$. The MECP~\eqref{equ:mecp_alt} is {infeasible} if and only if there exists a certificate vector $\mathbf{y} \in \mathbb{R}^n\setminus\{\mathbf{0}\}$ satisfying the following two conditions.}
			
			\noindent \textit{(C1)} \textcolor{black}{Vector $\mathbf{y}$ lies in the \emph{uncontrollable subspace} $\bm{\mathcal{V}}_{\mathrm{unc}} = \mathrm{Null}(\boldsymbol{\mathcal{C}}_N^\top)$, which is spanned by the generalized left eigenvectors of $\mathbf{A}$ associated with uncontrollable modes, that is,}
			\begin{align}
			\textcolor{black}{	\mathbf{y} \in \bm{\mathcal{V}}_{\mathrm{unc}} }
				&= \textcolor{black}{\mathrm{Null}(\boldsymbol{\mathcal{C}}_N^\top) = \mathrm{span}\big\{\mathbf{v} \mid \exists \lambda, k : (\mathbf{A}^\top - \lambda \mathbf{I})^k \mathbf{v} = 0} \nonumber \\
				&\qquad \textcolor{black}{\text{and } \mathbf{v}^\top \boldsymbol{\mathcal{C}}_N = 0 \big\}.} \label{eq:uncontrollable_subspace}
			\end{align}
			\textit{(C2)} \textcolor{black}{The target state satisfies $\mathbf{y}^\top \Delta \mathbf{x} \neq 0$.}
		\end{theorem}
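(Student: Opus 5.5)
The plan splits the statement into two parts. The first is the feasibility equivalence, which is the Fredholm alternative~\eqref{equ:certificate_alt}. The second is the structural claim in (C1): $\mathrm{Null}(\boldsymbol{\mathcal{C}}_N^\top)$ equals the span of the generalized left eigenvectors $\mathbf{v}$ of $\mathbf{A}$ with $\mathbf{v}^\top\boldsymbol{\mathcal{C}}_N=\mathbf{0}$. Once that subspace identity holds, the ``if and only if'' follows at once. If MECP is infeasible, $\Delta\mathbf{x}\notin\mathrm{range}(\boldsymbol{\mathcal{C}}_N)=\mathrm{Null}(\boldsymbol{\mathcal{C}}_N^\top)^\perp$. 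So some $\mathbf{y}\in\mathrm{Null}(\boldsymbol{\mathcal{C}}_N^\top)$ has $\mathbf{y}^\top\Delta\mathbf{x}\neq 0$; the orthogonal projection of $\Delta\mathbf{x}$ onto that null space is one such choice. Conversely, if such a $\mathbf{y}$ existed together with a feasible $\mathbf{u}$, we would get $\mathbf{y}^\top\Delta\mathbf{x}=\mathbf{y}^\top\boldsymbol{\mathcal{C}}_N\mathbf{u}=0$, a contradiction.

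For the subspace identity, the inclusion ``$\supseteq$'' is trivial, since every spanning vector lies in the null space by definition. For ``$\subseteq$'', I first show that $\mathcal{W}:=\mathrm{Null}(\boldsymbol{\mathcal{C}}_N^\top)$ is $\mathbf{A}^\top$-invariant. For $N\ge n$, $\mathcal{W}=\{\mathbf{v}:\mathbf{B}^\top(\mathbf{A}^\top)^k\mathbf{v}=\mathbf{0},\ k=0,\dots,N-1\}$. Cayley--Hamilton shows this equals $\bigcap_{k\ge0}\mathrm{Null}(\mathbf{B}^\top(\mathbf{A}^\top)^k)$, which is clearly invariant under $\mathbf{A}^\top$. (If $N<n$ is allowed, invariance can fail; I would state the result for $N\ge n$, consistent with the paper's standing remark that $N=n$ suffices.)

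Next, work over $\mathbb{C}$. Apply the primary decomposition $\mathbb{C}^n=\bigoplus_\lambda \mathcal{G}_\lambda$ with $\mathcal{G}_\lambda=\mathrm{Null}((\mathbf{A}^\top-\lambda\mathbf{I})^n)$. For an $\mathbf{A}^\top$-invariant subspace $\mathcal{W}$, this gives $\mathcal{W}=\bigoplus_\lambda(\mathcal{W}\cap\mathcal{G}_\lambda)$. The reason is that the spectral projectors $P_\lambda$ are polynomials in $\mathbf{A}^\top$, so $P_\lambda\mathbf{y}\in\mathcal{W}$ for each $\mathbf{y}\in\mathcal{W}$. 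Hence any $\mathbf{y}\in\mathcal{W}$ decomposes as $\mathbf{y}=\sum_\lambda P_\lambda\mathbf{y}$. Each component satisfies $(\mathbf{A}^\top-\lambda\mathbf{I})^k P_\lambda\mathbf{y}=\mathbf{0}$ and $(P_\lambda\mathbf{y})^\top\boldsymbol{\mathcal{C}}_N=\mathbf{0}$, so each is a generalized left eigenvector of the stated type. Complex eigenvalues occur in conjugate pairs, and $P_{\bar\lambda}\mathbf{y}=\overline{P_\lambda\mathbf{y}}$ for real $\mathbf{y}$. The span can therefore be taken over $\mathbb{C}$ and intersected with $\mathbb{R}^n$, or real parts of conjugate pairs can be used. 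I would also remark that each nonzero $\mathcal{W}\cap\mathcal{G}_\lambda$ contains a true eigenvector $\mathbf{w}$ with $\mathbf{w}^\top\mathbf{B}=\mathbf{0}$, obtained as the last nonzero vector in the chain $(\mathbf{A}^\top-\lambda\mathbf{I})^j\mathbf{v}$. This is what justifies calling these eigenvalues uncontrollable PBH modes.

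The main obstacle is the invariance and projector step. The decomposition fails for arbitrary subspaces, so the argument hinges on the spectral projectors being polynomials in $\mathbf{A}^\top$, together with careful handling of the horizon $N$ and the real/complex bookkeeping.
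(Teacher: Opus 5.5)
Your proposal is correct and follows essentially the same route as the paper: the Fredholm alternative gives the equivalence, Cayley--Hamilton gives $\mathbf{A}^\top$-invariance of $\mathrm{Null}(\boldsymbol{\mathcal{C}}_N^\top)$, and that invariant subspace is split into its intersections with the generalized eigenspaces, with a base-eigenvector argument identifying the modes as PBH-uncontrollable. You justify the split via spectral projectors that are polynomials in $\mathbf{A}^\top$, where the paper invokes the Jordan decomposition of $\mathbf{A}^\top|_{\bm{\mathcal{Y}}}$; your version is also more careful than the paper's write-up on three points: the $N\ge n$ requirement, the complex/real bookkeeping, and the fact that the reverse inclusion is immediate from the definition in \eqref{eq:uncontrollable_subspace} rather than deduced from the PBH condition on the base eigenvector alone.
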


		\begin{proof}
			By Cayley-Hamilton, $\mathrm{Null}(\boldsymbol{\mathcal{C}}_N^\top) = \mathrm{Null}(\boldsymbol{\mathcal{C}}_n^\top)$ for $N \ge n$; we therefore consider $N=n$ and write $\bm{\mathcal{Y}} := \mathrm{Null}(\boldsymbol{\mathcal{C}}_n^\top)$.
			
			\noindent\textit{(Necessity: infeasible $\Rightarrow$ $\exists\,\mathbf{y}$ satisfying C1 and C2.)} Since the MECP is infeasible, the Fredholm Alternative~\cite{Fredholm1903} guarantees the existence of $\mathbf{y} \neq \mathbf{0}$ with $\boldsymbol{\mathcal{C}}_n^\top \mathbf{y} = \mathbf{0}$ and $\mathbf{y}^\top \Delta\mathbf{x} \neq 0$. Condition (C2) is immediate. For (C1), we establish the structural identity $\bm{\mathcal{Y}} = \bm{\mathcal{V}}_{\mathrm{unc}}$ by showing both inclusions.
			
			\noindent Let $\mathbf{y} \in \bm{\mathcal{Y}}$ and set $\mathbf{z} = \mathbf{A}^\top \mathbf{y}$ and verify $\mathbf{z}^\top \mathbf{A}^k \mathbf{B} = \mathbf{0}^\top$ for $k = 0, \ldots, n-1$. \textit{For $k = 0, \ldots, n-2$} we have \ $\mathbf{z}^\top \mathbf{A}^k \mathbf{B} = \mathbf{y}^\top \mathbf{A}^{k+1}\mathbf{B} = \mathbf{0}^\top$, since $k+1 \le n-1$ and $\mathbf{y} \in \bm{\mathcal{Y}}$.  \textit{For $k = n-1$:}\ $\mathbf{z}^\top \mathbf{A}^{n-1}\mathbf{B} = \mathbf{y}^\top \mathbf{A}^n \mathbf{B}$. By the Cayley-Hamilton theorem, $\mathbf{A}^n = \sum_{j=0}^{n-1} c_j \mathbf{A}^j$, so
				$\mathbf{y}^\top \mathbf{A}^n \mathbf{B} = \sum_{j=0}^{n-1} c_j (\mathbf{y}^\top \mathbf{A}^j \mathbf{B}) = \mathbf{0}^\top$
				since $\mathbf{y} \in \bm{\mathcal{Y}}$ implies $\mathbf{y}^\top \mathbf{A}^j \mathbf{B} = \mathbf{0}^\top$ for every $j = 0, \ldots, n-1$, then $\mathbf{z} = \mathbf{A}^\top \mathbf{y} \in \bm{\mathcal{Y}}$, confirming that $\bm{\mathcal{Y}}$ is $\mathbf{A}^\top$-invariant.

			Now since $\bm{\mathcal{Y}}$ is a finite-dimensional $\mathbf{A}^\top$-invariant subspace, the restriction $\mathbf{A}^\top|_{\bm{\mathcal{Y}}}$ is a well-defined linear operator on $\bm{\mathcal{Y}}$. By the Jordan decomposition theorem applied to $\mathbf{A}^\top|_{\bm{\mathcal{Y}}}$,
			$\bm{\mathcal{Y}} = \bigoplus_j \bm{\mathcal{E}}_j,$
			where $\bigoplus$ defines the Minkowski sum and each $\bm{\mathcal{E}}_j = \ker\bigl((\mathbf{A}^\top - \lambda_j\mathbf{I})^{n_j}\bigr) \cap \bm{\mathcal{Y}}$ is the generalized eigenspace of $\mathbf{A}^\top|_{\bm{\mathcal{Y}}}$ associated with eigenvalue $\lambda_j$, and $n_j = \dim \bm{\mathcal{E}}_j$.
			Every $\mathbf{v} \in \bm{\mathcal{Y}}$ is therefore a linear combination of generalized eigenvectors of $\mathbf{A}^\top$ that lie in $\bm{\mathcal{Y}}$.
			
		 We now show that each $\lambda_j$ is an uncontrollable eigenvalue.			Let $\mathbf{w}_j \in \bm{\mathcal{E}}_j$ be a base eigenvector: $\mathbf{A}^\top \mathbf{w}_j = \lambda_j \mathbf{w}_j$, equivalently $\mathbf{w}_j^\top \mathbf{A} = \lambda_j \mathbf{w}_j^\top$. Since $\mathbf{w}_j \in \bm{\mathcal{Y}} = \mathrm{Null}(\boldsymbol{\mathcal{C}}_n^\top)$, the $k=0$ block gives $\mathbf{w}_j^\top \mathbf{B} = \mathbf{0}^\top$. By the PBH test, $\lambda_j$ is an uncontrollable eigenvalue of $(\mathbf{A},\mathbf{B})$. Then,	for any $\mathbf{v} \in \bm{\mathcal{E}}_j$, by construction $\bm{\mathcal{E}}_j \subseteq \bm{\mathcal{Y}}$, so $\mathbf{v} \in \bm{\mathcal{Y}}$ and $\mathbf{v}^\top \boldsymbol{\mathcal{C}}_n = \mathbf{0}$. Thus \emph{every} generalized eigenvector appearing in the Jordan chain of $\mathbf{A}^\top|_{\bm{\mathcal{Y}}}$ lies in $\bm{\mathcal{Y}}$ and is associated with an uncontrollable eigenvalue.
			
		The previous steps show that $\bm{\mathcal{Y}} \subseteq \bm{\mathcal{V}}_{\mathrm{unc}}$. For the reverse inclusion $(\supseteq)$, let $\mathbf{v} \in \bm{\mathcal{V}}_{\mathrm{unc}}$, i.e., $\mathbf{v}$ is a generalized eigenvector of $\mathbf{A}^\top$ associated with some uncontrollable eigenvalue $\lambda_j$. By definition~\eqref{eq:uncontrollable_subspace}, the base eigenvector $\mathbf{w}_j$ in the Jordan chain satisfies $\mathbf{w}_j^\top \mathbf{B} = \mathbf{0}^\top$ (the PBH condition), and it follows that $\mathbf{v}^\top \boldsymbol{\mathcal{C}}_n = \mathbf{0}$, hence $\mathbf{v} \in \mathrm{Null}(\boldsymbol{\mathcal{C}}_n^\top) = \bm{\mathcal{Y}}$. Therefore $\bm{\mathcal{Y}} = \bm{\mathcal{V}}_{\mathrm{unc}}$.
			
			\noindent\textit{(Sufficiency: $\mathbf{y}$ satisfies C1 and C2 $\Rightarrow$ infeasible.)}
			Suppose $\mathbf{y} \in \bm{\mathcal{V}}_{\mathrm{unc}} = \bm{\mathcal{Y}} = \mathrm{Null}(\boldsymbol{\mathcal{C}}_n^\top)$ (C1), so $\boldsymbol{\mathcal{C}}_n^\top \mathbf{y} = \mathbf{0}$, i.e., $\mathbf{y}^\top \mathbf{A}^k \mathbf{B} = \mathbf{0}^\top$ for $k = 0, \ldots, n-1$. Combined with $\mathbf{y}^\top \Delta\mathbf{x} \neq 0$ (C2), the Fredholm Alternative applied to $\boldsymbol{\mathcal{C}}_n \mathbf{u} = \Delta\mathbf{x}$ gives infeasibility. This concludes the proof.
		\end{proof}
		
		\normalcolor

		Theorem~\ref{thm:PBHcert} characterizes the uncontrollable subspace defined as $\bm{\mathcal{V}}_{\mathrm{unc}} = \mathrm{Null}(\boldsymbol{\mathcal{C}}_N^\top)$. Any valid certificate $\mathbf{y}$ is a nonzero element of $\bm{\mathcal{V}}_{\mathrm{unc}}$ additionally satisfying $\mathbf{y}^\top\Delta\mathbf{x}\neq 0$; the set of valid certificates $\{\mathbf{y} \in \bm{\mathcal{V}}_{\mathrm{unc}}\setminus\{\mathbf{0}\} : \mathbf{y}^\top\Delta\mathbf{x}\neq 0\}$ is \emph{not} itself a subspace. The existence of such a vector $\mathbf{y}$ alone does \textit{not} imply infeasibility for \textit{every} target; it only identifies a direction in which control is impossible.

		\begin{corollary}[Diagonalizable Case]\label{thm:PBHcertnond}\label{cor:interpretuncont}
			\textcolor{black}{Consider the LTI system~\eqref{equ:sys} where $\mathbf{A} \in \mathbb{R}^{n\times n}$ is diagonalizable with distinct eigenvalues. The MECP is infeasible if and only if there exists $\mathbf{y} = \sum_{j\in\bm{\mathcal{I}}}\alpha_j\mathbf{w}_j\neq\mathbf{0}$ (where $\bm{\mathcal{I}}=\{j\mid\mathbf{w}_j^\top\mathbf{B}=\mathbf{0}^\top\}$) with $\mathbf{y}^\top\Delta\mathbf{x}\neq 0$. In this case $\bm{\mathcal{V}}_{\mathrm{unc}}=\mathrm{span}\{\mathbf{w}_j:j\in\bm{\mathcal{I}}\}$.}
		\end{corollary}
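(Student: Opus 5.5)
The plan is to specialize Theorem~\ref{thm:PBHcert} to the case where $\mathbf{A}$ has $n$ distinct eigenvalues $\lambda_1,\dots,\lambda_n$ with left eigenvectors $\mathbf{w}_1,\dots,\mathbf{w}_n$. The central step is the identity $\bm{\mathcal{V}}_{\mathrm{unc}}=\mathrm{Null}(\boldsymbol{\mathcal{C}}_N^\top)=\mathrm{span}\{\mathbf{w}_j: j\in\bm{\mathcal{I}}\}$. Once that holds, the ``if and only if'' statement follows at once from Theorem~\ref{thm:PBHcert}: a vector $\mathbf{y}$ satisfies (C1) exactly when it has the form $\sum_{j\in\bm{\mathcal{I}}}\alpha_j\mathbf{w}_j$, and (C2) is exactly the condition $\mathbf{y}^\top\Delta\mathbf{x}\neq 0$.

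\emph{Inclusion $\supseteq$.} If $j\in\bm{\mathcal{I}}$, then $\mathbf{w}_j^\top\mathbf{A}^k\mathbf{B}=\lambda_j^k\mathbf{w}_j^\top\mathbf{B}=\mathbf{0}^\top$ for every $k$. Hence $\mathbf{w}_j^\top\boldsymbol{\mathcal{C}}_N=\mathbf{0}$, and by linearity the whole span lies in the null space.

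\emph{Inclusion $\subseteq$.} Since the eigenvalues are distinct, $\{\mathbf{w}_j\}$ is a basis of $\mathbb{C}^n$. Any $\mathbf{y}\in\mathrm{Null}(\boldsymbol{\mathcal{C}}_N^\top)$ can therefore be written uniquely as $\mathbf{y}=\sum_{j=1}^n\alpha_j\mathbf{w}_j$. I will show that $\alpha_j=0$ for every $j\notin\bm{\mathcal{I}}$. Write $\mathbf{b}_j^\top:=\mathbf{w}_j^\top\mathbf{B}$. The condition $\mathbf{y}^\top\mathbf{A}^k\mathbf{B}=\mathbf{0}^\top$ for $k=0,\dots,n-1$ becomes $\sum_j\lambda_j^k\alpha_j\mathbf{b}_j^\top=\mathbf{0}^\top$ for each such $k$. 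Stacking these $n$ equations over $k$ gives
\[
\mathbf{V}\,\mathrm{diag}(\alpha_j)\,[\mathbf{b}_1,\dots,\mathbf{b}_n]^\top=\mathbf{0},
\]
where $\mathbf{V}$ is the Vandermonde matrix with entries $V_{kj}=\lambda_j^{k}$. Because the $\lambda_j$ are distinct, $\mathbf{V}$ is invertible, so $\alpha_j\mathbf{b}_j=\mathbf{0}$ for every $j$. Thus $\alpha_j\neq0$ forces $\mathbf{b}_j=\mathbf{0}$, that is, $j\in\bm{\mathcal{I}}$.

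The same conclusion can be reached from Theorem~\ref{thm:PBHcert}, since with distinct eigenvalues every generalized eigenvector is an ordinary eigenvector. I will nonetheless give the Vandermonde argument, because it is self-contained. I take $N\ge n$, as in the theorem; otherwise only the first $N$ rows of $\mathbf{V}$ are available and the argument fails.

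\textbf{Main obstacle: real versus complex scalars.} The eigenvalues, and hence the $\mathbf{w}_j$, may be complex, while $\mathbf{y}$ must be real. For $\subseteq$ I will expand the real $\mathbf{y}$ in $\mathbb{C}^n$; the conclusion $\mathbf{y}\in\mathrm{span}_{\mathbb{C}}\{\mathbf{w}_j:j\in\bm{\mathcal{I}}\}$ is unaffected. For $\supseteq$, and for producing a real certificate, I will use that $\bm{\mathcal{I}}$ is closed under complex conjugation: conjugating $\mathbf{w}_j^\top\mathbf{B}=\mathbf{0}$ gives $\bar{\mathbf{w}}_j^\top\mathbf{B}=\mathbf{0}$ because $\mathbf{B}$ is real. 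Consequently the complex span is conjugation-invariant, and its real points are exactly $\mathrm{Null}(\boldsymbol{\mathcal{C}}_N^\top)\cap\mathbb{R}^n$. So taking the real or imaginary part of a complex certificate with $\mathbf{y}^\top\Delta\mathbf{x}\neq0$ yields a real certificate, and at least one of the two parts still satisfies the condition because $\Delta\mathbf{x}$ is real.
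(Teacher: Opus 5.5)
Your proposal is correct and follows essentially the same route as the paper's proof: expand $\mathbf{y}$ in the left-eigenvector basis, reduce $\mathbf{y}^\top\mathbf{A}^k\mathbf{B}=\mathbf{0}^\top$ to a Vandermonde system (the paper treats the columns of $\mathbf{B}$ one at a time, while you stack them), and conclude that $\alpha_j\neq 0$ forces $\mathbf{w}_j^\top\mathbf{B}=\mathbf{0}^\top$. You also make explicit three points the paper leaves implicit: the reverse inclusion, the requirement $N\ge n$ (which the paper's ``full column rank'' claim silently needs), and the real-versus-complex bookkeeping.
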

		
		\begin{proof}
			\textcolor{black}{Theorem~\ref{thm:PBHcert} applied to the diagonalizable case: since $\{\mathbf{w}_1,\ldots,\mathbf{w}_n\}$ form a basis, we expand $\mathbf{y}=\sum_i\alpha_i\mathbf{w}_i$ and the orthogonality condition $\mathbf{y}^\top\mathbf{A}^k\mathbf{B}=\mathbf{0}^\top$ becomes $\sum_i\alpha_i\lambda_i^k(\mathbf{w}_i^\top\mathbf{B})=\mathbf{0}^\top$ for $k=0,\ldots,N-1$. Writing $\boldsymbol{\gamma}_i^{(\ell)}=\alpha_i(\mathbf{w}_i^\top\mathbf{b}_\ell)\in\mathbb{R}$ for each column $\mathbf{b}_\ell$ of $\mathbf{B}$, the Vandermonde system $\mathbf{V}\boldsymbol{\gamma}^{(\ell)}=\mathbf{0}$ (with $V_{ki}=\lambda_i^k$) has the unique solution $\boldsymbol{\gamma}^{(\ell)}=\mathbf{0}$ since distinct eigenvalues make $\mathbf{V}$ have full column rank. Hence $\alpha_i\neq 0\Rightarrow\mathbf{w}_i^\top\mathbf{b}_\ell=0$ for all $\ell$, i.e., $\mathbf{w}_i^\top\mathbf{B}=\mathbf{0}^\top$.}
		\end{proof}
	
		\vspace{0.15cm}
		\noindent \textsc{\textbf{Implementation of Infeasibility Certificates}.} The PBH test is valuable for modal diagnosis, but it typically requires eigenvalue computations: for dense $n\times n$ matrices this has $\bm{\mathcal{O}}(n^3)$ cost, and even in sparse settings it relies on iterative spectral routines whose convergence can depend strongly on eigenvalue separation (spectral gaps). In large-scale systems, it can be advantageous to compute an {uncontrollability certificate} directly. 
		{We propose Alg.~\ref{alg:inviter} as a generic numerical procedure to identify uncontrollability certificates. This algorithm handles both diagonalizable and non-diagonalizable systems by verifying spectral properties alongside the geometric requirements of the Fredholm Alternative.} 
		
First, recall that infeasibility of $\boldsymbol{\mathcal{C}}_N \mathbf{u}=\Delta \mathbf{x}$ is certified by a nonzero $\mathbf{y}$ with $\boldsymbol{\mathcal{C}}_N^\top \mathbf{y}=\mathbf{0}$ and $\mathbf{y}^\top\Delta \mathbf{x}\neq 0$. This certificate can be computed \emph{without forming} $\boldsymbol{\mathcal{C}}_N$, using matrix-free products with $\mathbf{A}^\top$ and $\mathbf{B}^\top$ which is precisely the structure exploited by Krylov subspace methods~\cite{saad2011numerical}. In this letter we use the \emph{shifted inverse iteration}, which is well suited to this matrix-free setting. \textcolor{black}{Rather than assume a user-provided shift $\sigma$, the implementation first generates a small candidate set of shifts $\Sigma$ from Gershgorin-type spectral bounds for $\mathbf{A}$; see~\cite{varga2011gervsgorin}. The shift set $\Sigma$ is inexpensive to generate: the required Gershgorin bounds are obtained directly from the diagonal entries and row or column sums of $|\mathbf A|$, so this preprocessing scales linearly with the number of nonzeros. One then selects a shift $\sigma\in\Sigma$ and runs inverse iteration with that fixed value. The formal version of Alg.~\ref{alg:inviter} is a \emph{certificate-search} routine: it scans the shifts in $\Sigma$ and returns the first successful pair $(\sigma^\star,\mathbf y_{\sigma^\star})$.} For a fixed shift $\sigma\in\Sigma$, repeatedly applying $(\mathbf{A}^\top-\sigma \mathbf{I})^{-1}$ and normalizing drives the iterate toward a left eigenvector associated with the eigenvalue closest to $\sigma$; see~\cite{ipsen1997inverse,saad2011numerical}. Thus $\Sigma$ is a small set of candidate shifts generated at the beginning, and the role of the scan is to provide coarse spectral coverage rather than an exact eigenvalue estimate. \textcolor{black}{In the forensic variant all shifts are scanned and successful certificates retained.} Each iteration is dominated by one solve with $(\mathbf{A}^\top-\sigma \mathbf{I})$, while the residual $\|\mathbf{A}^\top \mathbf{y}-\lambda \mathbf{y}\|_2$ provides a reliable stopping criterion.

% In the forensic implementation, the same per-shift routine is applied across all $\sigma_\ell\in\Sigma$ and every successful returned certificate is retained; since there is at most one accepted certificate per shift, this stage produces at most $q_\sigma$ certificate vectors $\{\mathbf y_{\sigma_\ell}\}_{\ell=1}^{q_\sigma}$.

		The resulting iterate is then tested for certificate validity by checking \textit{(i)} near-orthogonality to the input directions $\|\mathbf{B}^\top \mathbf{y}\|_\infty \leq \epsilon_\mathbf{B}$, and \textit{(ii)} target violation, $|\mathbf{y}^\top\Delta \mathbf{x}|$ being non-negligible. \textcolor{black}{The condition $\|\mathbf{B}^\top \mathbf{y}\|_\infty \leq \epsilon_\mathbf{B}$ is a \emph{numerical proxy} for the theoretical condition $\boldsymbol{\mathcal{C}}_N^\top \mathbf{y} = \mathbf{0}$.  Since Alg.~\ref{alg:inviter} enforces $\mathbf{y}$ to be an eigenvector (or generalized eigenvector) of $\mathbf{A}^\top$, the orthogonality to the entire controllability matrix $\boldsymbol{\mathcal{C}}_N$ is guaranteed if $\mathbf{y}$ is orthogonal to the first block $\mathbf{B}$. This is because $\mathbf{y}^\top \mathbf{A}^k \mathbf{B} = (\mathbf{y}^\top \mathbf{A}^k) \mathbf{B} = (\lambda^k \mathbf{y}^\top) \mathbf{B} = \lambda^k (\mathbf{y}^\top \mathbf{B})$. Thus, verifying $\mathbf{y}^\top \mathbf{B} \approx \mathbf{0}$ is computationally sufficient and allows us to avoid the prohibitive memory and time costs of forming $\boldsymbol{\mathcal{C}}_N$ explicitly. This proxy is exact when $\mathbf{y}$ is a true eigenvector and approximate to within $\|\mathbf{A}^\top\mathbf{y}-\lambda\mathbf{y}\|_2 \leq \varepsilon_{\mathrm{eig}}$ otherwise. }

We also note the following. \textit{(i)} The small but nonzero values of \(\|\mathbf{B}^\top \mathbf{y}\|_\infty\) may also indicate weakly actuated directions, and should therefore be interpreted as numerical evidence of near-uncontrollability rather than exact algebraic uncontrollability. \textit{(ii)} Alg.~\ref{alg:inviter} does not form the inverse of \((\mathbf A^\top-\sigma \mathbf I)\) explicitly. For each fixed shift, the method repeatedly solves linear systems with the same coefficient matrix, so a single sparse or dense factorization can be reused throughout the $K_{\max}$ inverse-iteration updates at that shift. Thus, the dominant dense cost is one \(O(n^3)\) LU factorization per shift, followed by \(O(n^2)\) backsolves per iteration. In sparse settings, the cost is governed by the corresponding sparse linear solver and can scale much more favorably than a global eigendecomposition. \textit{(iii)} The distinction between \emph{certification mode} and \emph{forensic mode} is important: certification stops at the first successful certificate, while the forensic scan intentionally continues through the whole shift set in order to collect up to one accepted certificate per shift before spectral extraction. \textit{(iv)} Algorithm~\ref{alg:inviter} is a one-sided certificate-search routine. A successful return certifies infeasibility of the prescribed target displacement, whereas failure after scanning the finite shift set \(\Sigma\) is inconclusive and should not be interpreted as a proof of feasibility.

\normalcolor

\begin{algorithm}[t]
	\caption{Inverse iteration for certificate $\mathbf{y}$}
	\label{alg:inviter}\label{alg:cert}
	\footnotesize
	\DontPrintSemicolon
	\setlength{\algomargin}{0.5em}
	\SetAlgoSkip{smallskip}
	\SetInd{0.2em}{0.4em}
	\SetKwInOut{Input}{Input}\SetKwInOut{Output}{Output}
	\Input{$\mathbf{A},\mathbf{B},\Delta \mathbf{x}$, $\Sigma=\{\sigma_\ell\}_{\ell=1}^{q_\sigma}$, $\varepsilon_{\mathrm{eig}},\varepsilon_\mathbf{B}$, $\tau_\Delta$, $K_{\max}$}
\textcolor{black}{	\Output{$(\sigma^\star,\mathbf{y}_{\sigma^\star})$ or failure}}
	\For{$\ell=1,\dots,q_\sigma$}{
		\textcolor{black}{$\sigma\leftarrow \sigma_\ell$,
		$\mathbf{y}_{\sigma}\leftarrow$ random, $\mathbf{y}_{\sigma}\leftarrow \mathbf{y}_{\sigma}/\|\mathbf{y}_{\sigma}\|_2$}, $\mathbf{M}\leftarrow (\mathbf{A}^\top-\sigma \mathbf{I})$\;
		\For{$k=1,\dots,K_{\max}$}{
			Solve $\mathbf{M}\mathbf{z}=\mathbf{y}_{\sigma}$, $\mathbf{y}_{\sigma}\leftarrow \mathbf{z}/\|\mathbf{z}\|_2$, $\lambda\leftarrow \frac{\mathbf{y}_{\sigma}^\top \mathbf{A}^\top \mathbf{y}_{\sigma}}{\mathbf{y}_{\sigma}^\top \mathbf{y}_{\sigma}}$\;
			\If{$\|\mathbf{A}^\top \mathbf{y}_{\sigma}-\lambda \mathbf{y}_{\sigma}\|_2\le \varepsilon_{\mathrm{eig}}$}{break\;}
		}
		\If{$\|\mathbf{B}^\top \mathbf{y}_{\sigma}\|_\infty\le\varepsilon_\mathbf{B}$ and $|\mathbf{y}_{\sigma}^\top\Delta \mathbf{x}|>\tau_\Delta$}{\KwRet{$(\sigma,\mathbf{y}_{\sigma})$}\;}
	}
%	\KwRet{\textcolor{black}{failure after scanning $\Sigma$}}\;
\end{algorithm}

%{\small\auditnote{Important implementation detail: the code does \emph{not} require the final single-vector eigen-residual to satisfy $\varepsilon_{\mathrm{eig}}$ before declaring success. That is not necessarily a problem if the accepted object is a \emph{mixed} certificate spanning several uncontrollable modes, but then the manuscript should describe this accurately rather than implying that every accepted $\mathbf y$ is a numerically converged single eigenvector.}}
		
		\vspace{0.15cm}
		\noindent \textsc{\textbf{Novelty}.} The structural results in Theorem~\ref{thm:PBHcert} and Corollary~\ref{thm:PBHcertnond} unify two distinct lineages of control theory: the geometric approach rooted in convex duality~\cite[Section 5.8]{boyd2004convex} and the spectral approach grounded in modal analysis in~\cite[Section 6.2]{kailath1980linear} and \cite{hautus1969controllability}. While the classical PBH test attributes uncontrollability to specific eigenmodes and the Fredholm Alternative characterizes infeasibility via algebraic certificates, our results provide a strong connection between these domains. The novelty of this approach lies in its ability to bypass the global spectrum. While the traditional PBH test does not take the target $\Delta \mathbf{x}$ into account, our implementation exploits the dual certificate to identify only the modes that obstruct the desired maneuver. 
		
%		By framing the certificate search through shifted inverse iteration, the algorithm becomes inherently robust to the system's underlying structure. As established in Corollary~\ref{thm:PBHcertnond}, for non-diagonalizable systems, the iteration naturally converges toward the uncontrollable generalized eigenspace. This enables a unified computational method without prior knowledge of whether the system possesses Jordan blocks.
		
		Despite extensive literature on controllability and convex duality, these topics have largely been treated in isolation. \textcolor{black}{To the best of our knowledge, no prior work has provided an explicit computational connection between the geometric and spectral characterizations of uncontrollability. While connections between controllability and duality appear in various forms in the literature (e.g.,~\cite{balakrishnan2003sdp} that links SDP duality to Lyapunov/Riccati equations) the present framework uses dual certificates as the central objects for directly recovering uncontrollable PBH modes.} Specifically, this framework enables the study of PBH uncontrollability through infeasibility certificates by \textit{(i)} treating a certificate direction as the primary object, \textit{(ii)} using it to generate a minimal invariant Krylov subspace, and \textit{(iii)} extracting the participating uncontrollable modes without computing a global eigendecomposition.

\begin{algorithm}[t]
	\caption{Spectral extraction from certificates}
	\label{alg:extractionKry}\label{alg:extraction}
	\footnotesize
	\DontPrintSemicolon
	\setlength{\algomargin}{0.5em}
	\SetAlgoSkip{smallskip}
	\SetInd{0.2em}{0.4em}
	\SetKwInOut{Input}{Input}\SetKwInOut{Output}{Output}
	\Input{$\mathbf{A}$, $\{\mathbf{y}_{\sigma_\ell}\}_{\ell=1}^{q_\sigma}$,  $r_{\max}$, $\varepsilon_{\mathrm{rank}},\varepsilon_{\mathrm{ls}}$, 	$\Lambda\leftarrow \emptyset$}
\textcolor{black}{	\Output{Extracted eigenvalues $\{\lambda_j\}$}}
	\For{$\ell=1,\dots,q_\sigma$}{
		\If{$\mathbf{y}_{\sigma_\ell}$ is not successful}{continue}
		$\mathbf{k}_0\leftarrow \mathbf{y}_{\sigma_\ell}$, $\mathbf{K}\leftarrow[\mathbf{k}_0]$, $r\leftarrow 0$\;
		\For{$j=1,\dots,r_{\max}$}{
			$\mathbf{k}_j\leftarrow \mathbf{A}^\top \mathbf{k}_{j-1}$, $\mathbf{K}\leftarrow[\mathbf{K},\mathbf{k}_j]$\;
			\If{$\mathrm{rank}_\varepsilon(\mathbf{K})=\mathrm{rank}_\varepsilon(\mathbf{K}(:,1\!:\!j))$}{$r\leftarrow j$, \KwBreak}
		}
		\If{$r=0$}{continue}
		$\mathbf{K}_0\leftarrow \mathbf{K}(:,1\!:\!r)$, $\mathbf{K}_r\leftarrow \mathbf{K}(:,r\!+\!1)$, $\mathbf{c}\leftarrow \arg\min_{\mathbf{c}}\|\mathbf{K}_0\mathbf{c}-\mathbf{K}_r\|_2$\;
		\If{$\|\mathbf{K}_0\mathbf{c}-\mathbf{K}_r\|_2\le\varepsilon_{\mathrm{ls}}$}{
			$P(z)\leftarrow z^r-\sum_{i=0}^{r-1}c_{i+1}z^i$\;
		\textcolor{black}{	$\Lambda\leftarrow \Lambda \cup \{\text{roots of }P\}$\;}
		}
	}
%\textcolor{black}{\KwRet{numerically merged $\Lambda$}\;}
\end{algorithm}

		\section{Spectral Extraction from Certificates}\label{sec:extraction}

		While Theorem~\ref{thm:PBHcert} and Corollary~\ref{thm:PBHcertnond} establish that $\mathbf{y}$ is a linear combination of uncontrollable modes, a practical challenge remains: identifying which specific eigenvalues $\lambda_i$ are associated with a given certificate $\mathbf{y}$. 
		\begin{theorem}[Extraction of Uncontrollable Modes]\label{thm:extraction}
			{Let $\mathbf{y} \in \mathbb{R}^n$ be a valid uncontrollability certificate. Let $\mathcal{K}_r(\mathbf{A}^\top, \mathbf{y}) = \mathrm{span}\{\mathbf{y}, \mathbf{A}^\top \mathbf{y}, \dots, (\mathbf{A}^\top)^{r-1} \mathbf{y}\}$ be the $\mathbf{A}^\top$-invariant Krylov subspace generated by $\mathbf{y}$, where $r$ is the minimal dimension such that $\mathbf{A}^\top \mathcal{K}_r \subseteq \mathcal{K}_r$, and $\bm{\mathcal{I}}_\mathbf{y}\subseteq\bm{\mathcal{I}}$ indexes the uncontrollable eigenvalues whose generalized eigenspace contains vectors appearing with a nonzero coefficient in the expression of $\mathbf{y}$ as linear combination of generalized eigenvectors. Then the set of eigenvalues of the restricted operator $\mathbf{A}^\top|_{\mathcal{K}_r}$ is exactly the subset of uncontrollable eigenvalues $\{\lambda_j\}_{j \in \bm{\mathcal{I}}_\mathbf{y}}$ that contribute to the certificate $\mathbf{y}$.}
		\end{theorem}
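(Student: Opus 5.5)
The plan is to identify the eigenvalues of $\mathbf{A}^\top|_{\mathcal{K}_r}$ with the roots of the minimal polynomial of $\mathbf{y}$ relative to $\mathbf{A}^\top$. These roots are then read off from the decomposition of $\mathbf{y}$ supplied by Theorem~\ref{thm:PBHcert}.

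First, by Theorem~\ref{thm:PBHcert}, $\mathbf{y}\in\bm{\mathcal{V}}_{\mathrm{unc}}$. So $\mathbf{y}=\sum_{j}\mathbf{y}_j$ with $\mathbf{y}_j\in\ker\bigl((\mathbf{A}^\top-\lambda_j\mathbf{I})^{n_j}\bigr)$, where the $\lambda_j$ range over distinct eigenvalues. Every nonzero component $\mathbf{y}_j$ lies in an uncontrollable generalized eigenspace; this is exactly the defining property of $\bm{\mathcal{I}}_\mathbf{y}$, namely $j\in\bm{\mathcal{I}}_\mathbf{y}$ iff $\mathbf{y}_j\neq\mathbf{0}$. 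The decomposition is unique because the generalized eigenspaces form a direct sum. For each $j\in\bm{\mathcal{I}}_\mathbf{y}$, let $d_j\ge 1$ be the smallest integer with $(\mathbf{A}^\top-\lambda_j\mathbf{I})^{d_j}\mathbf{y}_j=\mathbf{0}$. Define
\[
p_\mathbf{y}(z)=\prod_{j\in\bm{\mathcal{I}}_\mathbf{y}}(z-\lambda_j)^{d_j}.
\]

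Second, I will show that $p_\mathbf{y}$ is the monic polynomial of least degree with $p(\mathbf{A}^\top)\mathbf{y}=\mathbf{0}$. It annihilates each $\mathbf{y}_j$, since factors belonging to different eigenvalues commute. For minimality, take any $p$ with $p(\mathbf{A}^\top)\mathbf{y}=\mathbf{0}$. Each generalized eigenspace is $\mathbf{A}^\top$-invariant, so by uniqueness of the direct-sum decomposition $p(\mathbf{A}^\top)\mathbf{y}_j=\mathbf{0}$ for each $j$. On the $j$-th generalized eigenspace, write $p(z)=(z-\lambda_j)^{e}q(z)$ with $q(\lambda_j)\neq 0$. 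Then $q(\mathbf{A}^\top)$ is invertible there, because its only eigenvalue on that space is $q(\lambda_j)$. Hence $(\mathbf{A}^\top-\lambda_j\mathbf{I})^{e}\mathbf{y}_j=\mathbf{0}$, which forces $e\ge d_j$. Therefore $p_\mathbf{y}\mid p$. Combining the two directions gives $r=\deg p_\mathbf{y}=\sum_j d_j$. The vectors $\mathbf{y},\dots,(\mathbf{A}^\top)^{r-1}\mathbf{y}$ are linearly independent, since a dependence would produce an annihilating polynomial of smaller degree. Also $(\mathbf{A}^\top)^r\mathbf{y}$ lies in their span, so $\mathcal{K}_r$ is the smallest $\mathbf{A}^\top$-invariant Krylov subspace generated by $\mathbf{y}$, matching the $r$ in the statement.

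Third, in the basis $\{(\mathbf{A}^\top)^i\mathbf{y}\}_{i=0}^{r-1}$ the restriction $\mathbf{A}^\top|_{\mathcal{K}_r}$ is represented by the companion matrix of $p_\mathbf{y}$. Its characteristic polynomial is therefore $p_\mathbf{y}$ (this is also the polynomial $P$ computed in Alg.~\ref{alg:extraction}). Its eigenvalue set is thus exactly $\{\lambda_j\}_{j\in\bm{\mathcal{I}}_\mathbf{y}}$, and each of these is uncontrollable by the PBH argument inside the proof of Theorem~\ref{thm:PBHcert}.

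I expect the main obstacle to be the minimality step, specifically showing that no eigenvalue with $\mathbf{y}_j\neq\mathbf{0}$ can be dropped from an annihilating polynomial. The invertibility of $q(\mathbf{A}^\top)$ on each generalized eigenspace handles this. A secondary care point is complex eigenvalues of the real matrix $\mathbf{A}$: I will work over $\mathbb{C}$ throughout. Conjugate pairs then appear together, so $p_\mathbf{y}$ has real coefficients, and the Krylov construction stays real.
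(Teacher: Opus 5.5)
Your proposal is correct and follows essentially the paper's route: you decompose $\mathbf{y}$ into its generalized-eigenspace components via Theorem~\ref{thm:PBHcert} and identify the eigenvalues of $\mathbf{A}^\top|_{\mathcal{K}_r}$ with the roots of a product of powers $(z-\lambda_j)^{d_j}$ over $j\in\bm{\mathcal{I}}_\mathbf{y}$, except that you actually prove the minimality step (which the paper only asserts through its direct-sum-of-cyclic-subspaces remark) and finish with the companion-matrix representation. Your exponent $d_j$, the nilpotency index of the component $\mathbf{y}_j$, is also the right one: the paper describes its exponent as a largest Jordan block size, and read literally that can overstate $r$ (for example, when $\mathbf{y}$ is a left eigenvector lying inside a larger Jordan block), although it does not change the eigenvalue set.
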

		
		\begin{proof}
			{By Theorem~\ref{thm:PBHcert}, any certificate $\mathbf{y}$ can be expressed as $\mathbf{y} = \sum_{j \in \bm{\mathcal{I}}_\mathbf{y}} \alpha_j \mathbf{v}_j$, where each $\mathbf{v}_j$ denotes the full projection of $\mathbf{y}$ onto the generalized eigenspace of $\lambda_j$.
				Since the subspaces spanned by generalized eigenvectors associated with distinct eigenvalues are linearly independent, the Krylov subspace $\mathcal{K}_r$ is the direct sum of the cyclic subspaces generated by each $\mathbf{v}_j$. The minimal polynomial of $\mathbf{A}^\top$ restricted to $\mathcal{K}_r$ is $P(z) = \prod_{j \in \bm{\mathcal{I}}_\mathbf{y}} (z - \lambda_j)^{m_j}$, where $m_j$ is the size of the largest Jordan block associated with $\lambda_j$ in the minimal polynomial. Consequently, the eigenvalues of the reduced-order operator are precisely the uncontrollable eigenvalues composing $\mathbf{y}$.}
		\end{proof}
		
		Theorem~\ref{thm:extraction} shows that, instead of performing a full $O(n^3)$ spectral decomposition of $\mathbf{A}$, one can identify the uncontrollable modes by solving a small-scale least-squares problem to find the coefficients of the minimal polynomial associated with $\mathbf{y}$. If $\mathbf{y}$ is composed of $r$ modes, where $r \ll n$, the uncontrollable eigenvalues are found by finding the roots of a polynomial of degree $r$. Once $\lambda_j$ is identified, the associated uncontrollable eigenvector $\mathbf{w}_j$ can be recovered via the kernel of $\mathbf{A}^\top - \lambda_j \mathbf{I}$.

\noindent\textsc{\textbf{Implementation and Novelty}.} Alg.~\ref{alg:extractionKry} implements Theorem~\ref{thm:extraction} by converting any infeasibility certificate $\mathbf{y}$ into a small spectral problem. In the present framework this certificate is specifically the output $\mathbf{y}_{\sigma^\star}$ returned by Alg.~\ref{alg:inviter} for some accepted shift $\sigma^\star\in\Sigma$. \textcolor{black}{In the spectral extraction, the same idea is applied to each successful certificate returned across the scanned shifts, and the resulting eigenvalues are merged numerically afterward.} The approach relies on the fact that if $\mathbf{y}_{\sigma^\star}$ is composed of $r$ uncontrollable modes, the set of vectors $\{\mathbf{y}_{\sigma^\star}, \mathbf{A}^\top \mathbf{y}_{\sigma^\star}, \dots, (\mathbf{A}^\top)^r \mathbf{y}_{\sigma^\star}\}$ must be linearly dependent. The algorithm builds a Krylov sequence using only matrix-vector products, detects the minimal order $r$ via rank saturation of the resulting Krylov matrix $\mathbf{K}$, and fits a linear relation to recover the minimal polynomial $P(z)$ associated with $\mathbf{y}_{\sigma^\star}$. The roots of $P$ are exactly the uncontrollable eigenvalues participating in $\mathbf{y}_{\sigma^\star}$ (Theorem~\ref{thm:extraction}). \textcolor{black}{In the current implementation, the least-squares residual is also used as a confidence check: small residuals are labeled as accepted extractions, while larger residuals are retained but marked as low-confidence rather than discarded.} This procedure does not merely replace one eigenvalue computation with another: the PBH workflow requires resolving eigenstructure in $\mathbb{R}^n$, whereas the proposed extraction computes eigenvalues of a companion matrix of size $r\times r$ with $r\ll n$. Hence, the spectral computation is reduced from an $n$-dimensional task to a low-order algebraic operation.

		\noindent	\textcolor{black}{\textsc{\textbf{Certification-Extraction (C+E) Integration.}}	Algorithms~\ref{alg:inviter} and~\ref{alg:extractionKry} play sequential and complementary roles in the proposed certificate-and-extraction framework. In certification mode, Alg.~\ref{alg:inviter} is used exactly as written: it scans $\Sigma=\{\sigma_\ell\}_{\ell=1}^{q_\sigma}$ and returns the first successful pair \((\sigma^\star,\mathbf{y}_{\sigma^\star})\) satisfying the infeasibility conditions. In forensic mode, the same per-shift certificate routine is evaluated across all shifts, yielding a set of at most $q_\sigma$ successful certificates \(\{\mathbf y_{\sigma_\ell}\}\), because there is at most one accepted certificate associated with each shift. Alg.~\ref{alg:extractionKry} is then the spectral extraction step: it is applied to each successful certificate, and the resulting extracted eigenvalues are merged numerically afterward. If a certificate is obtained directly from an optimization solver that solves the MECP~\eqref{equ:mecp_alt}, Alg.~\ref{alg:inviter} is bypassed and Alg.~\ref{alg:extractionKry} is applied directly to that solver-generated certificate.}

		\begin{remark}[Convergence Guarantees]\label{rem:convergence} Alg.~\ref{alg:inviter} is shifted inverse iteration on $\mathbf{A}^\top$. Under the assumption that $\sigma$ is not an eigenvalue of $\mathbf{A}$ and that the initial vector $\mathbf{y}^{(0)}$ has a nonzero component along the target eigenvector $\mathbf{w}^*$ (which holds with probability $1$ for a random start), the iterates converge to $\mathbf{w}^*$ at linear rate; see~\cite[Sec. 4.1.3]{saad2011numerical} for more discussions on this.  Alg.~\ref{alg:extractionKry} terminates finitely as rank saturation of the Krylov matrix at order $r$ is guaranteed to occur at $r = \dim\,\mathcal{K}_r(\mathbf{A}^\top,\mathbf{y})\leq n_{\mathrm{unc}}$, which is the number of distinct uncontrollable eigenvalues in $\mathbf{y}$.
		\end{remark}

        \begin{remark}[Complex Eigenvalues]\label{rem:complex}\
			The developments extend naturally to systems with complex eigenvalues. The PBH condition is stated with the conjugate transpose and reads $\mathbf{w}^{H}\mathbf{A}=\lambda\,\mathbf{w}^{H}$ together with $\mathbf{w}^{H}\mathbf{B}=\mathbf{0}$, and the infeasibility condition in \eqref{equ:certificate_alt} is written as $\boldsymbol{\mathcal{C}}_{N}^{H}\mathbf{y}=\mathbf{0}$ and $\mathbf{y}^{H}\Delta\mathbf{x}\neq 0$; when targeting complex modes directly, Alg.~\ref{alg:inviter} extends by taking $\sigma\in\mathbb{C}$ and replacing the Rayleigh quotient with its Hermitian form $\lambda\leftarrow\mathbf{y}^H\mathbf{A}^\top\mathbf{y}/\mathbf{y}^H\mathbf{y}$. For real data $(\mathbf{A},\mathbf{B},\Delta\mathbf{x})$, solvers typically return real certificates; in that case, uncontrollable complex conjugate pairs induce real invariant subspaces spanned by $\Re(\mathbf{w})$ and $\Im(\mathbf{w})$, and the certificate decomposition implied by Theorem~\ref{thm:PBHcert} and Corollary~\ref{cor:interpretuncont} remains valid when interpreted over these real invariant subspaces. Consequently, the extraction mechanism in Theorem~\ref{thm:extraction} and Alg.~\ref{alg:extractionKry} recovers oscillatory uncontrollable modes as conjugate-paired eigenvalues when the reduced polynomial (or reduced operator) is formed with real coefficients.
		\end{remark}
		\textcolor{black}{We finally note that the present letter is focused on discrete-time systems. While the same certificate viewpoint extends naturally to continuous time after replacing the finite controllability matrix by the continuous-time reachability operator, this extension is outside the scope of this work.}
		
		\section{Case Studies}\label{sec:case}

        \color{black}
This section investigates the practicality of the developed algorithms. All simulations are performed on a 2026 Macbook Pro M5 Max with 128 GB of RAM; we used Python for all tests. We validate the methods in this letter on a generic routing/traffic model $\mathbf{x}_{k+1}=\mathbf{A}\mathbf{x}_k+\mathbf{B}\mathbf{u}_k$, where $\mathbf{x}_k$ stacks link occupancies at time step $k$ and $\mathbf{u}_k$ represents actuated inflows. The model is inspired by cell-transmission and \textit{store-and-forward} traffic dynamics in which a routing matrix encodes turning fractions and link-to-link propagation of traffic mass~\cite{aboudolas2009store}. In the benchmark generator, $\mathbf{A}$ is partitioned as $\mathbf{A}=\mathrm{blkdiag}(\mathbf{A}_{\mathrm{ctrl}},\mathbf{A}_{\mathrm{unc}})$ and $\mathbf{B}=[\,\mathbf{B}_{\mathrm{ctrl}}^\top\ \mathbf{0}^\top\,]^\top$, with $\mathbf{A}_{\mathrm{ctrl}}=\gamma\big((1-\alpha)\mathbf{I}+\alpha\mathbf{P}\big)$. We fix $\alpha=0.25$, $\gamma=0.98$, $m=\lfloor 0.1n\rfloor$, and horizon $N=n$. We test dense and sparse network structures. In the dense topology, $\mathbf{P}$ is a dense row-stochastic routing matrix with added self-weight; in the sparse one, $\mathbf{P}$ is a ring-type routing operator built from nominal forward/backward/self fractions $0.80/0.15/0.05$ before normalization. The block $\mathbf{A}_{\mathrm{unc}}$ plants real uncontrollable eigenvalues in $[-2,2]$ (unknown to our algorithms), and the displacement $\Delta\mathbf{x}$ is forced to excite those coordinates so that the benchmark contains prescribed unreachable maneuvers. 	We compare the following  methods:

			% \mknote{Space saver: replace the itemize below with Table~\ref{tab:methods} right after it. Saves $\sim$6--8 lines. Delete whichever version you don't want.}
			\begin{itemize}
			\item {\textbf{C}}: Alg.~\ref{alg:inviter} is used only for uncontrollability certification; it stops at the first successful infeasibility certificate. 
			\item {\textbf{C+E}}: the full certificate-and-extraction approach; it scans all prescribed $\sigma_l \in \Sigma$, applies Alg.~\ref{alg:extractionKry} to every successful certificate, and merges duplicate extracted eigenvalues.
			\item {\textbf{PBH}}: the classical left-eigenvector PBH test based on the full eigendecomposition of $\mathbf A$, via SciPy's \texttt{LAPACK}.
			\item \textbf{Gram}: the exact finite-horizon controllability Gramian, computed by the efficient power-doubling recursion~\cite{smith1968matrix}.
			\item \textbf{D-LS}: dense least squares to solve $||\boldsymbol{\mathcal{C}}_N \mathbf{u} - \Delta \mathbf{x} ||$ with some efficiency improvements.
			\item \textbf{MF-LS}: matrix-free least squares applied to the same problem without assembling $\boldsymbol{\mathcal{C}}_N$. 
		\end{itemize}

% \begin{table}[h]
% \centering
% \footnotesize
% \setlength{\tabcolsep}{3pt}
% \renewcommand{\arraystretch}{1.05}
% \caption{ Methods compared in the case studies. \mknote{Need to adjust the descriptions}}
% \label{tab:methods}
% \begin{tabular}{@{}p{0.12\linewidth}p{0.84\linewidth}@{}}
% \toprule
% \textbf{Method} & \textbf{Description} \\
% \midrule
% \textbf{C}     & Alg.~\ref{alg:inviter} for certification only; stops at the first successful certificate. \\
% \textbf{C+E}   & Full certificate-and-extraction: scans all $\sigma_\ell\!\in\!\Sigma$, applies Alg.~\ref{alg:extractionKry} to every successful certificate, merges duplicates. \\
% \textbf{PBH}   & Classical left-eigenvector PBH test via full eigendecomposition (SciPy/\texttt{LAPACK}). \\
% \textbf{Gram}  & Finite-horizon controllability Gramian via power-doubling~\cite{smith1968matrix}. \\
% \textbf{D-LS}  & Dense least squares on $\|\boldsymbol{\mathcal{C}}_N\mathbf{u}-\Delta\mathbf{x}\|$. \\
% \textbf{MF-LS} & Matrix-free least squares; $\boldsymbol{\mathcal{C}}_N$ never assembled. \\
% \bottomrule
% \end{tabular}
% \end{table}

		\begin{figure}[t]
%			\centering
	% \vspace{-0.3cm}
\hspace{-0.4cm}			\includegraphics[scale=0.95]{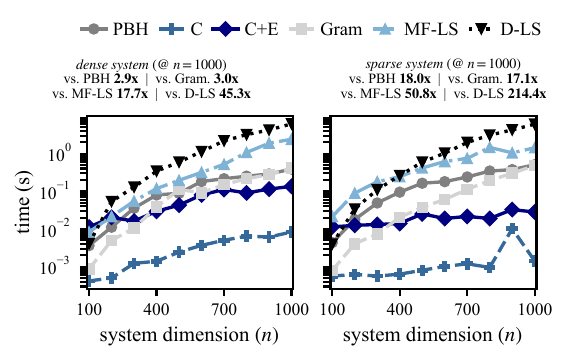}
	\vspace{-0.8cm}
			\caption{ Compute time for all methods (left (right) is for dense (sparse) systems). Ratio of improvement is with respect to \textbf{C+E}. }
			\label{fig:scaling_comparison}
            \vspace{0.4cm}
		\end{figure}

\begin{table}[t]
\centering
\footnotesize
\setlength{\tabcolsep}{2pt}
\renewcommand{\arraystretch}{0.92}
\caption{ Benchmark parameters used in the case studies.}
\label{tab:case_params}
\begin{tabular}{p{0.96\linewidth}}
\toprule
$q_\sigma=10$, $K_{\max}=20$,  $\{\varepsilon_B, \varepsilon_{\mathrm{eig}}, \tau_\Delta, \varepsilon_{\mathrm{PBH}}\}=10^{-10}$, $r_{\max}=50$, $\varepsilon_{\mathrm{rank}}=10^{-14}$, $\varepsilon_{\mathrm{ls}}=10^{-6}$, $\varepsilon_{\mathrm{ext}}=10^{-3}$. \\
\bottomrule
\end{tabular}
\end{table}

\begin{figure}[t]
	\centering
    % \vspace{-0.2cm}
		\includegraphics[scale=0.92]{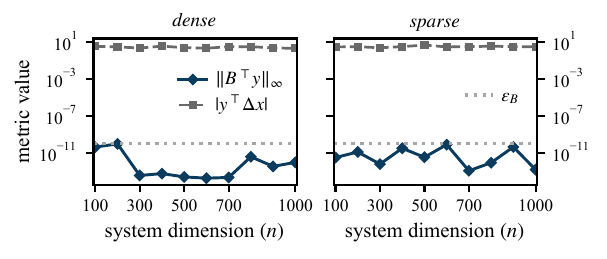}\vspace{-0.2cm}
		\caption{ Alg.~\ref{alg:inviter} certificate diagnostics: the infeasibility constraints are satisfied for both types of networks.}
		\label{fig:infeas}
        \vspace{0.4cm}
\end{figure}

\begin{figure}[t]
	\centering
		\includegraphics[scale=0.93]{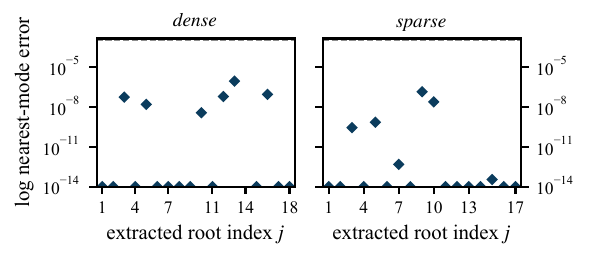} 

% \vspace*{-0.25cm}	
        
		\includegraphics[scale=0.95]{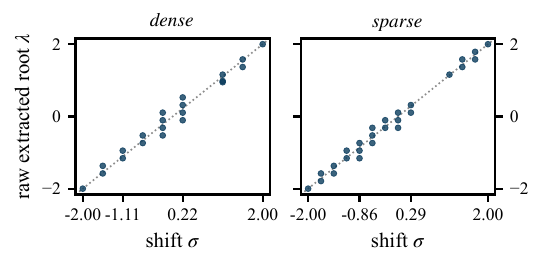}\vspace{-0.2cm}
        
		\caption{(\textit{Top}) Nearest-mode extraction error for \textbf{C+E} at $n=200$ with $n_{\mathrm{unc}}=20$ uncont. modes.  \textit{(Bottom): Extracted roots for every shift $\sigma_l$.}}
        \label{fig:extract}
        \vspace{0.3cm}
	\end{figure}

Tab.~\ref{tab:case_params} lists all relevant parameters.  Fig.~\ref{fig:scaling_comparison} shows the computational time for all methods. It demonstrates that the certification only via \textbf{C} is an efficient way to certify uncontrollability. When combined   with the spectral extraction, \textbf{C+E} remains competitive with the classical diagnostics. At the final benchmark size $n=1000$, \textbf{C+E} is about three times faster than \textbf{PBH} and the  \textbf{Gram} in the dense case, and about $18$ and $17$ times faster than \textbf{PBH} and the \textbf{Gram} in the sparse case. The least-squares based approaches are the slowest (no tuning of \textbf{MF-LS} produced major improvements), with the dense one predictably being the slowest.  The advantage of the proposed C+E approach becomes much larger in sparse systems. 

Fig.~\ref{fig:infeas} confirms the proxies of the certificate conditions numerically with $\|\mathbf{B}^\top \mathbf{y}\|_\infty$ remaining below the threshold, while $|\mathbf{y}^\top \Delta \mathbf{x}|$ stays strictly positive and of order one (thereby confirming~\eqref{equ:certificate_alt}). The spectral extraction shows what the full \textbf{C+E} approach recovers from those certificates in Fig.~\ref{fig:extract} (top) via the nearest-mode error. In the dense case, the extraction returns $18$ extracted eigenvalues from $9$ successful certificates and a total of $n_{\mathrm{unc}} = 20$ modes; in the sparse case, it returns $17$ extracted eigenvalues from $13$ successful certificates ($n_{\mathrm{unc}} = 20$).  Fig.~\ref{fig:extract} provides the complementary raw view---the extracted roots track the scanned shifts across the planted uncontrollable interval, which is consistent with the intended role of the shift scan as a coarse spectral coverage rather than as a precise eigenvalue estimator.
		\normalcolor

		% \mknote{Missing continuous-time remark: the response to R4.8b promises ``The framework extends to continuous-time systems by replacing the discrete-time controllability matrix $\mathcal{C}_N$ with the continuous Gramian; the PBH test and the certificate conditions are identical in both settings.'' This text does not appear anywhere in the manuscript. Either add it (e.g.\ as a one-sentence remark or footnote) or correct the response document. A reviewer who checks will notice this.}
	\section{Letter Summary and Future Work}\label{sec:summ}

	The practical message of this letter is threefold: \textit{(i)} a procedure
has been given to efficiently certify that a prescribed terminal state
$\Delta \mathbf{x}$ is unreachable, without forming the controllability matrix or performing a global eigendecomposition; \textit{(ii)} the resulting certificate \(\mathbf{y}\) can be passed directly to recover the specific uncontrollable modes obstructing that maneuver; and \textit{(iii)} the proposed methods are computationally inexpensive, making them amenable to  large-scale systems with thousands of states. This makes the methods even more relevant when the system matrices are recomputed for various linearization points.
		This work has limitations. Alg.~\ref{alg:extractionKry} is most reliable when the certificate contains a modest number of contributing modes (on the order of a few tens). As the extraction order $r$ increases, the Krylov matrix can become ill-conditioned, and the subsequent root-finding step becomes sensitive to finite precision perturbations. This is a standard numerical phenomenon: small coefficient errors in a degree-$r$ polynomial may induce disproportionately larger perturbations in its roots, particularly when roots are clustered or poorly separated. In our case studies, we observe accurate modal recovery for moderate $r$ (e.g., $r\approx 20$--$30$ of uncontrollable modes), while larger values of $r$ may exhibit degraded accuracy; thereby motivating more numerically stable reduced-order spectral realizations as a direction for future work.

\normalcolor

{
		\bibliographystyle{IEEEtran}
		\bibliography{letter_refs}}
		
	\end{document}